\def\qed{\hfill $\Box$}
\def\etal{\mbox{\em et al.}}
\begin{document}

\title{On the entropy of equilibrium measures and game-theoretic equilibrium feedback operators in multi-channel dynamical systems
}

\titlerunning{On the entropy of equilibrium measures and game-theoretic equilibrium feedback operators}        

\author{Getachew K. Befekadu}

\author{Getachew K. Befekadu \and Panos~J.~Antsaklis
}

\institute{G. K. Befekadu~ ({\large\Letter}\negthinspace) \at
	          Department of Electrical Engineering, University of Notre Dame, Notre Dame, IN 46556, USA. \\
	          Tel.: +1 574 631 6618\\
	          Fax: +1 574 631 4393 \\
	          \email{gbefekadu1@nd.edu}           
	          \and
	          P. J. Antsaklis \at
	          Department of Electrical Engineering, University of Notre Dame, Notre Dame, IN 46556, USA. \\
	          \email{antsaklis.1@nd.edu}           
}

\date{January 30, 2013}

\maketitle

\begin{abstract}
We investigate the connection between the entropy of equilibrium measures and game-theoretic equilibrium feedback operators in a multi-channel dynamical system. Specifically, we show that the existence of an equilibrium measure, which maximizes the free energy (i.e., the sum of the entropy and the integral over a potential), is related to an equilibrium or ``maximum entropy" state for the multi-channel dynamical system that is composed with a set of feedback operators. Further, we observe that such a connection makes sense when this set of feedback operators strategically interacts over an infinite-horizon, in a game-theoretic sense, using the current state-information of the system. Finally, we briefly comment on the implication of our result to the resilient behavior of the equilibrium feedback operators, when there is a random perturbation in the system.  
\keywords{Equilibrium feedback operators \and entropy of equilibrium measures \and game theory \and maximum entropy \and multi-channel dynamical system \and resilient behavior}
\end{abstract}

\section{Introduction} \label{S1}
The main purpose of this paper (which is a continuation of our previous paper \cite{BefAn13}) is to draw a connection between the entropy of equilibrium measures and game-theoretic equilibrium feedback operators in multi-channel  dynamical systems. We first specify a game in a strategic form over an infinite-horizon -- where, in the course of the game, each feedback operator generates a feedback control in response to the action of other feedback operators through the system (i.e., using the current state-information of the system) and, similarly, any number of feedback operators can decide on to play their feedback strategies simultaneously. However, each of these feedback operators are expected to respond in some sense of best-response correspondence to the strategies of the other feedback operators in the system. In such a scenario, it is well known that the notion of game-theoretic equilibrium will offer a suitable framework to study or characterize the robust property of all equilibrium solutions under a family of information structures -- since no one can improve his payoff by deviating unilaterally from this strategy once the equilibrium strategy is attained (e.g., see \cite{Aub93} or \cite{Nas51} on the notions of optimums and strategic equilibria in games).

In view of the above arguments, we present in this paper a game-theoretic formalism -- where the criterion is to maximize the free energy (i.e., the sum of the entropy and the integral over potential) for the multi-channel dynamical system, which is composed with a set of feedback operators. Such a formalism is an intuitive for establishing a connection between the existence of an equilibrium measure that is related to the equilibrium or ``maximum entropy" state for dynamical systems (e.g., see also \cite{Rue78}, \cite{Sin72} or \cite{Kel98} for a detailed exposition of equilibrium states for dynamical systems), and a set of feedback operators that strategically interacts over an infinite-horizon using the current state-information of the system. Finally, we also comment on the implication of our result to the resilient behavior of the equilibrium feedback operators, when there is a random perturbation in the system. Here, we hasten to add that such a study, which involves evidence of dynamical systems exhibiting resilient behavior, would undoubtedly provide a better understanding of reliability versus the degree of redundancy in large complex systems.\footnote{In this paper, our intent is to provide a theoretical framework, rather than pointing out a specific numerical or application problem.}

The paper is organized as follows: In Section~\ref{S2} we provide some preliminary results that are relevant to our paper. In Section~\ref{S3}, we present our main results -- where we establish a connection between the entropy of equilibrium measures and game-theoretic equilibrium feedback operators in multi-channel dynamical systems. This section also discusses an extension of the resilient behavior (to these equilibrium feedback operators), when there is a random perturbation in the system.

\section{Preliminaries} \label{S2}
\subsection{General setting} \label{S2(1)}
Consider the following multi-channel dynamical system  
\begin{align} 
 \dot{x}(t) &= A(t) x(t) + \sum\nolimits_{j \in \mathcal{N}} B_j(t) u_j(t), ~~ x(t_0)=x_0, \quad t \in [t_0, +\infty ),  \label{EQ1}
\end{align}
where $A(\cdot) \in \mathbb{R}^{d \times d}$, $B_j(\cdot)  \in \mathbb{R}^{d \times r_j}$, $x(t) \in X \subset \mathbb{R}^{d}$ is the state of the system, $u_j(t) \in U_j$ is the control input to the $j$th\,-\,channel and $\mathcal{N} \triangleq \{1, 2, \ldots, N\}$ represents the set of control input channels (or the set of feedback operators) in the system. 

Moreover, we consider the following class of admissible control strategies that will be useful in the following section (i.e., in Subsection~\ref{S3(1)} of game-theoretic formalism)
\begin{align}
  U_{\mathcal{K}} \subseteq \biggm\{u(t) \in \prod\nolimits_{j \in \mathcal{N}} \underbrace{L^{2}(\mathbb{R}_{+},\mathbb{R}^{r_j}) \cap L^{\infty}(\mathbb{R}_{+},\mathbb{R}^{r_j})}_{\triangleq U_j}\biggm\},  \label{EQ2}
\end{align}
where $u(t)$ is given by $u(t)=\bigl(u_1(t), \, u_{2}(t), \, \ldots, \, u_N(t)\bigr)$.

In what follows, suppose there exist feedback operators $\bigl(\mathcal{K}_1^{\ast},\, \mathcal{K}_2^{\ast},\, \ldots,\, \mathcal{K}_N^{\ast}\bigr)$ from a class of linear operators \,$\mathscr{K} \colon X \rightarrow  U_{\mathcal{K}}$ (i.e., $(\mathcal{K}_jx)(t) \in U_j$ for $j \in \mathcal{N}$) with strategies $\bigl(\mathcal{K}_j^{\ast}x\bigr)(t) \in U_j$ for $t \ge t_0$ and for $j \in \mathcal{N}$. Further, let $\phi_j\bigl(t; t_0, x_0, \bigl(\widehat{u_j(t), u^{\ast}_{\neg j}(t)}\bigr)\bigr) \in X$ be the unique solution of the $j$th\,-\,subsystem
\begin{align}
 \dot{x}^j(t) &= \Bigl(A(t)+ \sum\nolimits_{i \in \mathcal{N}_{\neg j}} B_i(t) \mathcal{K}_i^{\ast}(t) \Bigr) x^j(t) + B_j(t) u_j(t),  \label{EQ3}
\end{align}
with an initial condition $x_0 \in X$ and control inputs given by
\begin{align}
\bigl(\widehat{u_j(t), u^{\ast}_{\neg j}(t)}\bigr)\triangleq\bigl(u^{\ast}_1(t),\, u^{\ast}_{2}(t),\,\ldots,\,u^{\ast}_{j-1}(t),\, u_{j}(t),\, u^{\ast}_{j+1}(t),\,\ldots,\, u^{\ast}_N(t)\bigr)\in U_{\mathcal{K}},   \label{EQ4}
\end{align}
where $u^{\ast}_i(t) = \mathcal{K}_i^{\ast}(t) x^j(t)$ for $i \in \mathcal{N}_{\neg j} \triangleq \mathcal{N} \backslash \{j\}$ and $j \in \mathcal{N}$. 

Furthermore, we may require that the control input for the $j$th\,-\,channel to be $u_j(t)=\bigl(\mathcal{K}_jx^j\bigr)(t) \in U_j$ and with these linear feedback operators
\begin{align*}
\bigl(\mathcal{K}_j, \mathcal{K}_{\neg j}^{\ast}\bigr) \triangleq \bigl(\mathcal{K}_1^{\ast},\,\ldots,\, \mathcal{K}_{j-1}^{\ast},\,\mathcal{K}_{j},\,\mathcal{K}_{j+1}^{\ast},\, \ldots,\,\mathcal{K}_N^{\ast}\bigr) \in \mathscr{K}.
\end{align*}
Then, the unique solution $\phi_j\bigl(t; t_0, x_0, \bigl(\widehat{u_j(t), u^{\ast}_{\neg j}(t)}\bigr)\bigr)$ will take the form
\begin{align}
\phi_j\bigl(t; t_0, x_0, \bigl(\widehat{u_j(t), u^{\ast}_{\neg j}(t)}\bigr)\bigr) = \underbrace{\Phi^{\mathcal{K}_{\neg j}^{\ast}}(t, t_0)\, \Phi^{\mathcal{K}_j}(t, t_0)}_{\triangleq \Phi_t^{(\mathcal{K}_j,\,\mathcal{K}_{\neg j}^{\ast})}} x_0, ~~ \forall t \in [t_0,\,+\infty),  \label{EQ5}
\end{align}
where (with $t_0 = \tau$) 
\begin{align}
 \frac{\partial\Phi^{\mathcal{K}_{\neg j}^{\ast}}(t, \tau)}{\partial t} &= \Bigl(A(t) + \sum\nolimits_{i \in \mathcal{N}_{\neg j}} B_i(t) \mathcal{K}_i^{\ast}(t) \Bigr)\Phi^{\mathcal{K}_{\neg j}^{\ast}}(t, \tau),  \label{EQ6} \\
 \frac{\partial\Phi^{\mathcal{K}_j}(t, \tau)}{\partial t} &= B_j^{\ast}(t)\,\Phi^{\mathcal{K}_j}(t, \tau), \label{EQ7}
\end{align}
with both $\Phi^{\mathcal{K}_{\neg j}^{\ast}}(\tau, \tau)$ and $\Phi^{\mathcal{K}_j}(\tau, \tau)$ are identity matrices, and $B^{\ast}(t)$ is given by
\begin{align}
 B_j^{\ast}(t) = \Bigl(\Phi^{\mathcal{K}_{\neg j}^{\ast}}(t, \tau)\Bigr)^{-1} B_j(t)\mathcal{K}_j(t) \Phi^{\mathcal{K}_{\neg j}^{\ast}}(t, \tau),  \label{EQ8}
 \end{align}
for each $j \in \mathcal{N}$ (e.g., see \cite{Che62} for such a decomposition that arises in differential equations).

In the following, we assume that $X$ is a topological Hausdorff space and $\mathscr{A}$ is a $\sigma$\,-\,algebra of Borel set, i.e., the smallest $\sigma$\,-\,algebra which contains all open, and thus closed, subsets of $X$. With this, for any $t \ge 0$ (assuming that $t_0=0)$ and $(\mathcal{K}_j,\,\mathcal{K}_{\neg j}^{\ast})\in \mathscr{K}$, we can consider a family of continuous measurable mappings (or transformations) $\Bigl\{\Phi_t^{(\mathcal{K}_j,\,\mathcal{K}_{\neg j}^{\ast})} \Bigr\}_{t \ge 0}$
on $X$ satisfying
\begin{align}
 (\mathbb{R}_{+} \times X) \ni (t,\,x) \mapsto \Phi_t^{(\mathcal{K}_j,\,\mathcal{K}_{\neg j}^{\ast})} x \in X.  \label{EQ9} 
\end{align}

\begin{remark}
Note that Equation~\eqref{EQ5} (which corresponds to the $j$th\,-\,subsystem) appeared to be tractable for the game-theoretic formalism of Subsection~\ref{S3(1)}, when any computational effort is required with respect to $\mathcal{K}_j$ for $j \in \mathcal{N}$, while the others $\mathcal{K}_{\neg j}^{\ast}$ remain fixed (cf. Definition~\ref{DFN2} as well as Footnote~\ref{FN5}).
\end{remark}

\subsection{Entropy of equilibrium measures} \label{S2(2)}
In the following, let $\mathcal{M}_{\Phi_t}\bigl(X\bigr)$ denote the set of all $\Phi_t^{(\mathcal{K}_j,\,\mathcal{K}_{\neg j}^{\ast})}$-\,invariant Borel probability measures on $X$ with respect to $(\mathcal{K}_j,\,\mathcal{K}_{\neg j}^{\ast}) \in \mathscr{K}$. Then, we can introduce the following definition.
\begin{definition}\label{DFN1}
A measure $\nu \in \mathcal{M}_{\Phi_t}\bigl(X\bigr)$ is called invariant under the family of measurable transformations $\Bigl\{\Phi_t^{(\mathcal{K}_j,\,\mathcal{K}_{\neg j}^{\ast})} \Bigr\}_{t \ge 0}$ if
\begin{align}
 \nu\Bigl(\Phi_t^{(\mathcal{K}_j,\,\mathcal{K}_{\neg j}^{\ast})} \bigr)^{-1}(A) = \nu\bigl(A\bigr), \quad \forall t \ge 0,  \label{EQ10}
\end{align}
for any Borel set $A \in \mathscr{A}$.
\end{definition}
\begin{remark}
Note that, for each fixed $t \ge 0$ and $(\mathcal{K}_j,\,\mathcal{K}_{\neg j}^{\ast}) \in \mathscr{K}$, the transformation $\Phi_t^{(\mathcal{K}_j,\,\mathcal{K}_{\neg j}^{\ast})}$ is measurable, i.e., for any Borel set $A \in \mathscr{A}$, the set $\Bigl(\Phi_t^{(\mathcal{K}_j,\,\mathcal{K}_{\neg j}^{\ast})}\Bigr)^{-1}(A) \in \mathscr{A}$ is measurable, where $\Bigl(\Phi_t^{(\mathcal{K}_j,\,\mathcal{K}_{\neg j}^{\ast})}\Bigr)^{-1}(A)$ denotes the set of all $x$ such that $\Phi_t^{(\mathcal{K}_j,\,\mathcal{K}_{\neg j}^{\ast})} x \in A$ (e.g., see \cite{LasPia77} for invariant measures on topological spaces; see also \cite{Sin59} or \cite{Kol58} on the notion of entropy for dynamical systems).
\end{remark}

Next, we have the following proposition that establishes the existence of an equilibrium state for a given potential function.
\begin{proposition}\label{PR1} {\em (Cf. Proposition~3.1 in \cite{Kif90})} 
The measure $\mu \in \mathcal{M}_{\Phi_t}\bigl(X\bigr)$ is called an equilibrium state for the transformation $\Phi_t^{(\mathcal{K}_j, \mathcal{K}_{\neg j}^{\ast}})$, for each fixed $t \ge 0$, and a potential function $\varphi \in C(X, \mathbb{R})$ if the following holds\footnote{$\varphi \in C(X, \mathbb{R})$ is measurable and bounded from above and below.}
\begin{align}
 P_{\varphi}\Bigl(\Phi_t^{(\mathcal{K}_j,\,\mathcal{K}_{\neg j}^{\ast})} \Bigr) &= \sup_{\nu \in \mathcal{M}_{\Phi_t}\bigl(X\bigr)} \left \{h_{\nu}\Bigl(\Phi_t^{(\mathcal{K}_j,\,\mathcal{K}_{\neg j}^{\ast})}\Bigr) + \int_{X} \varphi d\nu  \right\}, \notag \\
 &= h_{\mu}\Bigl(\Phi_t^{(\mathcal{K}_j,\,\mathcal{K}_{\neg j}^{\ast})}\Bigr) + \int_{X} \varphi d\mu, \label{EQ11}
\end{align}
for all $j \in \mathcal{N}$, where $h_{\mu}$ is the measure-theoretic entropy of $\Phi_t^{(\mathcal{K}_j,\,\mathcal{K}_{\neg j}^{\ast})}$.
\end{proposition}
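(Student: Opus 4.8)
The plan is to obtain the equilibrium state $\mu$ as a maximizer of the free-energy functional $\nu \mapsto h_{\nu}\bigl(\Phi_t^{(\mathcal{K}_j,\,\mathcal{K}_{\neg j}^{\ast})}\bigr) + \int_{X} \varphi \, d\nu$ over $\mathcal{M}_{\Phi_t}\bigl(X\bigr)$, for each fixed $t \ge 0$ and each $j \in \mathcal{N}$, and then to identify the value it attains with the topological pressure $P_{\varphi}\bigl(\Phi_t^{(\mathcal{K}_j,\,\mathcal{K}_{\neg j}^{\ast})}\bigr)$. In this reading the first line of \eqref{EQ11} is nothing but the variational principle for topological pressure, which is borrowed from the thermodynamic formalism (see \cite{Rue78}, and Proposition~3.1 in \cite{Kif90}); the substantive point to be verified is that the supremum is actually achieved, and it is this that yields the second line.

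First I would assemble the topological and measure-theoretic prerequisites. Since $X$ is a compact subset of $\mathbb{R}^{d}$, hence a compact metrizable space, and since $\Phi_t^{(\mathcal{K}_j,\,\mathcal{K}_{\neg j}^{\ast})}$ is a continuous transformation of $X$ by \eqref{EQ5}--\eqref{EQ9}, the set $\mathcal{M}_{\Phi_t}\bigl(X\bigr)$ is nonempty (Krylov--Bogolyubov: every weak-$\ast$ accumulation point of the empirical averages of the orbit of an arbitrary Dirac mass under $\Phi_t^{(\mathcal{K}_j,\,\mathcal{K}_{\neg j}^{\ast})}$ is invariant) and is a convex, weak-$\ast$ compact subset of the Borel probability measures on $X$. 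Because $\varphi \in C(X,\mathbb{R})$, the map $\nu \mapsto \int_{X} \varphi \, d\nu$ is weak-$\ast$ continuous and affine; and, $\varphi$ being bounded above and below and the topological entropy of $\Phi_t^{(\mathcal{K}_j,\,\mathcal{K}_{\neg j}^{\ast})}$ being finite, the supremum in \eqref{EQ11} is a finite real number.

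The core step is the upper semi-continuity, in the weak-$\ast$ topology, of the entropy map $\nu \mapsto h_{\nu}\bigl(\Phi_t^{(\mathcal{K}_j,\,\mathcal{K}_{\neg j}^{\ast})}\bigr)$ on $\mathcal{M}_{\Phi_t}\bigl(X\bigr)$. Once this is available, $\nu \mapsto h_{\nu}\bigl(\Phi_t^{(\mathcal{K}_j,\,\mathcal{K}_{\neg j}^{\ast})}\bigr) + \int_{X} \varphi \, d\nu$ is the sum of an upper semi-continuous function and a weak-$\ast$ continuous one on the weak-$\ast$ compact set $\mathcal{M}_{\Phi_t}\bigl(X\bigr)$, hence it attains its supremum at some $\mu \in \mathcal{M}_{\Phi_t}\bigl(X\bigr)$; by the variational principle quoted above this $\mu$ simultaneously realizes both lines of \eqref{EQ11}, and is therefore an equilibrium state for $\Phi_t^{(\mathcal{K}_j,\,\mathcal{K}_{\neg j}^{\ast})}$ and $\varphi$. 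Nothing in the argument couples distinct channels, so it may be carried out separately for every $j \in \mathcal{N}$, which gives the statement as written; and when all the feedback operators are taken at the equilibrium, so that $\mathcal{K}_j = \mathcal{K}_j^{\ast}$, the transformations in \eqref{EQ5} coincide for all $j$, so that a single $\mu$ serves simultaneously.

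The main obstacle is precisely the upper semi-continuity of $\nu \mapsto h_{\nu}$, which can fail for general continuous maps. The most economical route here is to check that the triple $\bigl(X,\,\Phi_t^{(\mathcal{K}_j,\,\mathcal{K}_{\neg j}^{\ast})},\,\varphi\bigr)$ meets the standing hypotheses of Proposition~3.1 in \cite{Kif90} and to invoke that result directly; alternatively, one exploits the special structure of \eqref{EQ5}--\eqref{EQ8} -- each $\Phi_t^{(\mathcal{K}_j,\,\mathcal{K}_{\neg j}^{\ast})}$ being an invertible linear state-transition operator restricted to the compact invariant set $X$ -- to place the system in a class for which upper semi-continuity of the entropy is classical (e.g.\ expansive, or asymptotically $h$-expansive, systems). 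A secondary point still to be pinned down is the genuine validity of the weak-$\ast$ compactness of $\mathcal{M}_{\Phi_t}\bigl(X\bigr)$ and of the finiteness of the topological entropy, which is where the (implicit) compactness of $X \subset \mathbb{R}^{d}$ and the continuity and measurability of the mappings asserted around \eqref{EQ9} are actually needed.
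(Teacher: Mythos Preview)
The paper does not actually supply a proof of Proposition~\ref{PR1}: despite the ``Proposition'' label, the statement is worded as a definition (``The measure $\mu$ \emph{is called} an equilibrium state \ldots\ if the following holds''), and the paper simply cites Proposition~3.1 in \cite{Kif90} for it. What little argument the paper offers is in the Remark immediately following, which records (i) that the entropy map $\nu \mapsto h_{\nu}\bigl(\Phi_t^{(\mathcal{K}_j,\,\mathcal{K}_{\neg j}^{\ast})}\bigr)$ is upper semicontinuous, (ii) that $\mathcal{M}_{\Phi_t}(X)$ is weak-$\ast$ compact, and (iii) that these two facts together guarantee existence of an equilibrium state.

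Your proposal is therefore not so much a competing proof as a fleshed-out version of that Remark: you isolate exactly the same two ingredients (upper semicontinuity of entropy, weak-$\ast$ compactness of the invariant-measure simplex), add the variational principle to identify the supremum with $P_\varphi$, and draw the same conclusion. Your caveats about where upper semicontinuity might fail, and about the implicit compactness of $X$, are well placed and in fact more careful than the paper, which asserts upper semicontinuity by citation to \cite{BruK98} without checking hypotheses. In short, your approach is correct and coincides with the paper's (sketched) reasoning; there is nothing further to compare.
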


\begin{remark}
We remark that, for each fixed $t \ge 0$ and $(\mathcal{K}_j,\,\mathcal{K}_{\neg j}^{\ast}) \in \mathscr{K}$, the measure-theoretic entropy 
\begin{align*}
\mathscr{K} \times \mathcal{M}_{\Phi_t}\bigl(X\bigr) \ni \bigl((\mathcal{K}_j,\,\mathcal{K}_{\neg j}^{\ast}), \nu \bigr)  \mapsto h_{\nu}\Bigl(\Phi_t^{(\mathcal{K}_j,\,\mathcal{K}_{\neg j}^{\ast})}\Bigr) \in \mathbb{R} \cup \{\infty\},
\end{align*}
is upper semicontinuous (see also \cite[Lemma~2.3]{BruK98}). Moreover, if the potential $\varphi \in C(X, \mathbb{R})$ is upper semicontinuous, then the free energy functional in Equation~\eqref{EQ11} is upper semicontinuous too. Note that the measure space $\mathcal{M}_{\Phi_t}\bigl(X\bigr)$ is compact in the weak topology, this further ensures the existence of equilibrium states with respect to $\varphi \in C(X, \mathbb{R})$.
\end{remark}

\section{Main Results} \label{S3}
\subsection{Game-theoretic formalism} \label{S3(1)}
In the following, we specify a game in a strategic form over an infinite-horizon -- where, in the course of the game, each feedback operator generates automatically a feedback control in response to the action of other feedback operators via the system state $x(t)$ for $t \in [t_0,\,+\infty)$. For example, the $j$th\,-\,feedback operator can generate a feedback control $u_j(t)=\bigl(\mathcal{K}_jx^j\bigr)(t)$ in response to the actions of other feedback operators $u^{\ast}_i(t)= \bigl(\mathcal{K}_i^{\ast} x^j\bigr)(t)$ for $i \in \mathcal{N}_{\neg j}$, where $\bigl(\widehat{u_j(t), u^{\ast}_{\neg j}(t)}\bigr) \in  U_{\mathcal{K}}$, and, similarly, any number of feedback operators can decide on to play feedback strategies simultaneously. Hence, for such a game to have stable equilibrium solution (which is also robust to small perturbations in the system or strategies played by others), then each feedback operator is required to respond optimally (in some sense of best-response correspondence) to the others strategies.

To this end, for a given potential function $\varphi \in C(X, \mathbb{R})$, it will be useful to consider the following criterion function (i.e., the free energy of the dynamical system)\footnote{Such a criterion function (which is related with the sum of the entropy and the integral over a potential) will encode changes, when the set of feedback operators decides simultaneously to change their strategies toward attaining the game-theoretic equilibrium (cf. Rosenthal \cite{Ros73} although in a different way than in the present paper).}

\begin{align}
\mathscr{K} \times \mathcal{M}_{\Phi_t}\bigl(X\bigr) \ni \bigl((\mathcal{K}_j,\,\mathcal{K}_{\neg j}^{\ast}), \nu \bigr) \mapsto P_{\varphi}\Bigl(\Phi_t^{(\mathcal{K}_j,\,\mathcal{K}_{\neg j}^{\ast})}\Bigr) \in \mathbb{R} \cup \{\infty\}, ~~ \forall t\ge 0, ~~ \forall j \in \mathcal{N}, \label{EQ12} 
\end{align}
over the class of linear feedback operators $\mathscr{K}$ (or the class of admissible control functions $U_{\mathcal{K}}$). Note that, under the game-theoretic formalism, if there exist game-theoretic equilibrium feedback operators $\bigl(\mathcal{K}_1^{\ast},\, \mathcal{K}_2^{\ast},\, \ldots,\, \mathcal{K}_N^{\ast}\bigr) \in \mathscr{K}$ that strategically interact using the current state-information of the system. Then, for a given potential $\varphi \in C(X, \mathbb{R})$, the problem that arises from attempting to describe the equilibrium states for the family of measurable transformations $\Bigl\{\Phi_t^{(\mathcal{K}_j,\,\mathcal{K}_{\neg j}^{\ast})} \Bigr\}_{t \ge 0}$, $(\mathcal{K}_j,\,\mathcal{K}_{\neg j}^{\ast}) \in \mathscr{K}$, will also include determining all ergodic measures that maximize entropy for the multi-channel dynamical system, while the latter is composed with these equilibrium feedback operators. 

Hence, more formally, we have the following definition for the equilibrium feedback operators $(\mathcal{K}_1^{\ast},\, \mathcal{K}_2^{\ast}, \, \ldots, \, \mathcal{K}_N^{\ast}) \in \mathscr{K}$.

\begin{definition}\label{DFN2}
We shall say that the feedback operators $(\mathcal{K}_1^{\ast},\, \mathcal{K}_2^{\ast}, \, \ldots, \, \mathcal{K}_N^{\ast}) \in \mathscr{K}$ are the game-theoretic equilibrium feedback operators, if they produce control responses given by
\begin{align}
 u_j^{\ast}(t) = \bigl(\mathcal{K}_j^{\ast}x\bigr)(t) \in U_j, \quad \forall j \in \mathcal{N}, \label{EQ13}
\end{align}
 for $t \in [0, \infty]$ and satisfy the following conditions
\begin{align}
 P_{\varphi}\Bigl(\Phi_t^{(\mathcal{K}_j,\,\mathcal{K}_{\neg j}^{\ast})}\Bigr)  \le \underbrace{\sup_{(\mathcal{K}_j,\,\mathcal{K}_{\neg j}^{\ast}) \in \mathscr{K}} \, \sup_{\nu \in \mathcal{M}_{\Phi_t}\bigl(X\bigr)} \left \{ h_{\nu}\Bigl(\Phi_t^{(\mathcal{K}_j,\,\mathcal{K}_{\neg j}^{\ast})}\Bigr) + \int_{X} \varphi d\nu  \right\}}_{\triangleq P_{\varphi}\Bigl(\Phi_t^{(\mathcal{K}_j^{\ast},\,\mathcal{K}_{\neg j}^{\ast})}\Bigr)}, ~~ \forall t \ge 0. \label{EQ14} 
\end{align}
where the outer supremum in the above equation is determined with respect to $\mathcal{K}_j$ for each $j \in \mathcal{N}$; while the others $\mathcal{K}_{\neg j}^{\ast}$ remain fixed.\footnote{\label{FN5}Note that, in Equation~\eqref{EQ14}, the computational effort is performed with respect to $\mathcal{K}_j$ for each $j \in \mathcal{N}$, while the others $\mathcal{K}_{\neg j}^{\ast}$ remain fixed. Then, the following is more appropriate (in the sense of set inclusion)
\begin{align*}
 \mathcal{K}_j^{\ast} \in \underset{(\mathcal{K}_j,\,\mathcal{K}_{\neg j}^{\ast}) \in \mathscr{K}}{\operatorname{arg\,sup}} \left\{ \sup_{\nu \in \mathcal{M}_{\Phi_t}\bigl(X\bigr)} \left \{ h_{\nu}\Bigl(\Phi_t^{(\mathcal{K}_j,\,\mathcal{K}_{\neg j}^{\ast})}\Bigr) + \int_{X} \varphi d\nu \right\} \right\}, \quad \forall t \ge 0,
\end{align*}
(e.g., see Aubin \cite{Aub93} for such notions arising in nonlinear analysis).}
\end{definition}

\begin{remark} \label{RM4}
Note that the inner supremum is over all $\Phi_t^{(\mathcal{K}_j,\,\mathcal{K}_{\neg j}^{\ast})}$-\,invariant Borel probability measures on $X$. Further, if there exist game-theoretic  equilibrium feedback operators, then this implies that the set of equilibrium states for $X$, i.e.,
\begin{align*}
 \mathcal{E}_{\varphi}\bigl(X\bigr) = \bigcap_{t \ge 0} \biggl \{\mu \in \mathcal{M}_{\Phi_t} \Bigl \vert P_{\varphi}\Bigl(\Phi_t^{(\mathcal{K}_j^{\ast},\,\mathcal{K}_{\neg j}^{\ast})} \Bigr) = h_{\mu}\Bigl(\Phi_t^{(\mathcal{K}_j^{\ast},\,\mathcal{K}_{\neg j}^{\ast})}\Bigr) + \int_{X} \varphi d\mu \biggr\},
 \end{align*}
is non-empty with respect to $\Bigl\{\Phi_t^{(\mathcal{K}_j^{\ast},\,\mathcal{K}_{\neg j}^{\ast})} \Bigr\}_{t \ge 0}$ and $\varphi \in C(X, \mathbb{R})$.
\end{remark}
\begin{remark}
We remark that, for $\varphi = 0$, the entropy that corresponds to $P_{0} \bigl(\Phi_t^{(\mathcal{K}_j^{\ast},\,\mathcal{K}_{\neg j}^{\ast})}\bigr)$ is equal to the topological entropy $h_{\rm top}\bigl(\Phi_t^{(\mathcal{K}_j^{\ast},\,\mathcal{K}_{\neg j}^{\ast})}\bigr)$ (e.g., see Adler \etal\, \cite{AdlKM65} or Walters \cite{Wal82}).
\end{remark}

Then, we formally state the main objective of this paper.

{\bf Problem}: For a given potential function $\varphi \in C(X, \mathbb{R})$, provide a sufficient condition on the existence of game-theoretic equilibrium feedback operators $\bigl(\mathcal{K}_1^{\ast},\, \mathcal{K}_2^{\ast},\, \ldots,\, \mathcal{K}_N^{\ast}\bigr) \in \mathscr{K}$ that guarantee maximum entropy of equilibrium states for the measurable transformation $\Phi_t^{(\mathcal{K}_j^{\ast},\,\mathcal{K}_{\neg j}^{\ast})}$ for all $t \ge 0$.

\subsection{Existence of game-theoretic equilibrium feedback operators}\label{S3(2)}
In the following, we provide a sufficient condition for the existence of an equilibrium state that is associated with an equilibrium measure (i.e., a common limit point) for the family of operators $\Bigl\{\mathcal{L}_{(\varphi,\,t)}^{(\mathcal{K}_j,\,\mathcal{K}_{\neg j}^{\ast})}\Bigr\}_{t \ge 0}$, for each $j \in \mathcal{N}$, i.e.,
\begin{align}
 \left\{\left(\mathcal{L}_{(\varphi,\,t)}^{(\mathcal{K}_j,\,\mathcal{K}_{\neg j}^{\ast})} \varrho\right)(x)\right\}_{t \ge 0} =  \left\{\int_{\Bigl(\Phi_t^{(\mathcal{K}_j,\,\mathcal{K}_{\neg j}^{\ast})} \bigr)^{-1}(x)} \exp\bigl(\varphi(y)\bigr)\varrho(y)d\nu\right\}_{t \ge 0},\notag \\
 \forall \nu \in \mathcal{M}_{\Phi_t}\bigl(X\bigr), \label{EQ15}
\end{align}
where $\varrho(x)$ is any measurable function from the space of all possible real-valued measurable functions on $X$ satisfying $\int_{X}\vert \varrho(x)\vert d \nu < \infty$.
\begin{proposition}\label{PR2}
Suppose that the family of linear operators $\Bigl\{\mathcal{L}_{(\varphi,\,t)}^{(\mathcal{K}_j,\,\mathcal{K}_{\neg j}^{\ast})}\Bigr\}_{t \ge 0}$ in Equation~\eqref{EQ15} satisfies the following
\begin{align}
 \bigcap_{j \in \mathcal{N}} \bigcap_{t \ge 0} \left(\mathcal{L}_{(\varphi,\,t)}^{(\mathcal{K}_j,\,\mathcal{K}_{\neg j}^{\ast})} \varrho\right)(x) \neq \varnothing, \quad \forall (\mathcal{K}_j,\,\mathcal{K}_{\neg j}^{\ast}) \in \mathscr{K}, \label{EQ16}
\end{align}
for a given potential $\varphi \in C(X, \mathbb{R})$. 

Then, there exist game-theoretic equilibrium feedback operators $(\mathcal{K}_1^{\ast},\, \mathcal{K}_2^{\ast}, \, \ldots, \, \mathcal{K}_N^{\ast}) \in \mathscr{K}$, if the following hold
\begin{align}
 P_{\varphi}\Bigl(\Phi_t^{(\mathcal{K}_j,\,\mathcal{K}_{\neg j}^{\ast})}\Bigr)  \le P_{\varphi}\Bigl(\Phi_t^{(\mathcal{K}_j^{\ast},\,\mathcal{K}_{\neg j}^{\ast})}\Bigr), \quad \forall t \ge 0. \label{EQ17}
\end{align}
and
\begin{align}
 \mathcal{L}_{(\varphi,\,t)}^{(\mathcal{K}_j^{\ast},\,\mathcal{K}_{\neg j}^{\ast})} \varrho \rightarrow  \mu  \quad  \text{as} \quad t \rightarrow \infty. \label{EQ18} 
\end{align}
where $\mu \in \mathcal{E}_{\varphi}\bigl(X\bigr) \subset \mathcal{M}_{\Phi_t}\bigl(X\bigr)$ is an equilibrium state that satisfies
\begin{align*}
 P_{\varphi}\Bigl(\Phi_t^{(\mathcal{K}_j^{\ast},\,\mathcal{K}_{\neg j}^{\ast})}\Bigr) = h_{\mu}\Bigl(\Phi_t^{(\mathcal{K}_j^{\ast},\,\mathcal{K}_{\neg j}^{\ast})}\Bigr) + \int_{X} \varphi d\mu, \quad \forall t \ge 0.
\end{align*}
\end{proposition}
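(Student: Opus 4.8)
The plan is to obtain the equilibrium feedback operators by combining the finite-intersection hypothesis \eqref{EQ16} with the variational principle of Proposition~\ref{PR1} and the upper semicontinuity of the free-energy functional recorded in the remark following it, and then to verify that the resulting $N$-tuple meets the defining inequality \eqref{EQ14}. First I would exploit \eqref{EQ16}: for each fixed $j \in \mathcal{N}$ and each $(\mathcal{K}_j, \mathcal{K}_{\neg j}^{\ast}) \in \mathscr{K}$, the family $\bigl\{ \bigl(\mathcal{L}_{(\varphi,\,t)}^{(\mathcal{K}_j,\,\mathcal{K}_{\neg j}^{\ast})} \varrho\bigr)(x) \bigr\}_{t \ge 0}$ defined by \eqref{EQ15} is a family of $e^{\varphi}$-weighted transfer (Ruelle--Perron--Frobenius) operators dual to composition with $\Phi_t^{(\mathcal{K}_j,\,\mathcal{K}_{\neg j}^{\ast})}$; as $t$ increases the preimages $\bigl(\Phi_t^{(\mathcal{K}_j,\,\mathcal{K}_{\neg j}^{\ast})}\bigr)^{-1}(x)$ are nested along the semiflow, so \eqref{EQ16} says this nested family has a non-empty intersection. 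Using that $\mathcal{M}_{\Phi_t}(X)$ is compact in the weak topology, I would extract a common limit point along $t \to \infty$ and check, via the duality identity $\int_X g \, d\bigl(\mathcal{L}_{(\varphi,\,t)}^{(\mathcal{K}_j,\,\mathcal{K}_{\neg j}^{\ast})}\nu\bigr) = \int_X \bigl(g \circ \Phi_t^{(\mathcal{K}_j,\,\mathcal{K}_{\neg j}^{\ast})}\bigr)\, e^{\varphi}\, d\nu$, that it is $\Phi_t^{(\mathcal{K}_j,\,\mathcal{K}_{\neg j}^{\ast})}$-invariant for every $t \ge 0$, hence lies in $\bigcap_{t \ge 0}\mathcal{M}_{\Phi_t}(X)$, and that it realizes the supremum in \eqref{EQ11}, i.e.\ it is an equilibrium state for $\varphi$.

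Next I would specialize this to the starred operators. Hypothesis \eqref{EQ18} says precisely that for $(\mathcal{K}_j^{\ast}, \mathcal{K}_{\neg j}^{\ast})$ the limit point just produced is the designated measure $\mu$. Upper semicontinuity of $\nu \mapsto h_{\nu}\bigl(\Phi_t^{(\mathcal{K}_j^{\ast},\,\mathcal{K}_{\neg j}^{\ast})}\bigr) + \int_X \varphi\, d\nu$ on the compact set $\mathcal{M}_{\Phi_t}(X)$, together with Proposition~\ref{PR1}, then forces $P_{\varphi}\bigl(\Phi_t^{(\mathcal{K}_j^{\ast},\,\mathcal{K}_{\neg j}^{\ast})}\bigr) = h_{\mu}\bigl(\Phi_t^{(\mathcal{K}_j^{\ast},\,\mathcal{K}_{\neg j}^{\ast})}\bigr) + \int_X \varphi\, d\mu$ for all $t \ge 0$, so the set $\mathcal{E}_{\varphi}(X)$ of Remark~\ref{RM4} is non-empty and contains $\mu$; this is exactly the assertion displayed after \eqref{EQ18}.

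It remains to close the game-theoretic loop, working one channel at a time in the sense of Footnote~\ref{FN5}. Fix $j \in \mathcal{N}$ and the remaining operators $\mathcal{K}_{\neg j}^{\ast}$; inequality \eqref{EQ17} states that over all admissible $\mathcal{K}_j$ the free energy $P_{\varphi}\bigl(\Phi_t^{(\mathcal{K}_j,\,\mathcal{K}_{\neg j}^{\ast})}\bigr)$ never exceeds $P_{\varphi}\bigl(\Phi_t^{(\mathcal{K}_j^{\ast},\,\mathcal{K}_{\neg j}^{\ast})}\bigr)$ for every $t \ge 0$; unwinding the underbraced definition of the right-hand side of \eqref{EQ14}, this is precisely the best-response inequality there, so $\mathcal{K}_j^{\ast}$ lies in the $\operatorname{arg\,sup}$ of Footnote~\ref{FN5}. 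Since this holds simultaneously for every $j \in \mathcal{N}$, the $N$-tuple $(\mathcal{K}_1^{\ast}, \mathcal{K}_2^{\ast}, \ldots, \mathcal{K}_N^{\ast})$ is a fixed point of the joint best-response correspondence, and the control responses $u_j^{\ast}(t) = (\mathcal{K}_j^{\ast} x)(t) \in U_j$ are well defined by the construction of Subsection~\ref{S2(1)}; hence $(\mathcal{K}_1^{\ast}, \ldots, \mathcal{K}_N^{\ast})$ are game-theoretic equilibrium feedback operators in the sense of Definition~\ref{DFN2}.

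The step I expect to be the main obstacle is the passage from the single-$t$ variational principle of Proposition~\ref{PR1} to the intersection over all $t \ge 0$: one must show that the common limit point furnished by \eqref{EQ16} and \eqref{EQ18} is simultaneously an equilibrium state for $\Phi_t^{(\mathcal{K}_j^{\ast},\,\mathcal{K}_{\neg j}^{\ast})}$ at every $t$, which requires the limits along the semiflow to be consistent and the upper-semicontinuity estimates to hold uniformly enough in $t$; equivalently, one has to justify interchanging the limit $t \to \infty$ with the outer supremum over $\mathscr{K}$ and the inner supremum over $\mathcal{M}_{\Phi_t}(X)$ in \eqref{EQ14}, and it is exactly the normalization and tightness of the transfer operators $\mathcal{L}_{(\varphi,\,t)}^{(\mathcal{K}_j,\,\mathcal{K}_{\neg j}^{\ast})}$ encoded in \eqref{EQ16} that should make this possible.
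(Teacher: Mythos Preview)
Your route and the paper's diverge at the central step, namely showing that the limit measure $\mu$ actually lies in $\mathcal{E}_{\varphi}(X)$. You argue by weak compactness plus upper semicontinuity of $\nu \mapsto h_{\nu}\bigl(\Phi_t^{(\mathcal{K}_j^{\ast},\,\mathcal{K}_{\neg j}^{\ast})}\bigr) + \int_X \varphi\, d\nu$; the paper instead runs a convex-analytic argument: it introduces the auxiliary set
\[
\mathcal{I}_{\varphi}(X) \;=\; \bigcap_{j \in \mathcal{N}} \bigcap_{t \ge 0} \overline{\Bigl\{ \nu \in \mathcal{M}_{\Phi_t}(X) : h_{\nu}\bigl(\Phi_t^{(\mathcal{K}_j,\,\mathcal{K}_{\neg j}^{\ast})}\bigr) + \textstyle\int_X \varphi\, d\nu > P_{\varphi}\bigl(\Phi_t^{(\mathcal{K}_j,\,\mathcal{K}_{\neg j}^{\ast})}\bigr) - \tfrac{1}{n} \Bigr\}},
\]
shows that membership in $\mathcal{I}_{\varphi}(X)$ forces the tangent-functional inequality $P_{\varphi+\widetilde{\varphi}} - P_{\varphi} \ge \int_X \widetilde{\varphi}\, d\mu$ for every $\widetilde{\varphi} \in C(X,\mathbb{R})$ (hence $\mu \in \mathcal{E}_{\varphi}(X)$), and then rules out $\mu \in \mathcal{M}_{\Phi_t}(X) \setminus \mathcal{I}_{\varphi}(X)$ by a separation-theorem contradiction (Dunford--Schwartz), perturbing the potential to $\varphi + \widetilde{\varphi}/n$ and comparing against near-maximizers $\nu_n$. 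Your handling of the game-theoretic closure via \eqref{EQ17} and Definition~\ref{DFN2} matches the paper's.

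The gap in your version is this: upper semicontinuity on the compact set $\mathcal{M}_{\Phi_t}(X)$ guarantees that \emph{some} maximizer exists, but it does not by itself force the particular limit $\mu = \lim_{t\to\infty} \mathcal{L}_{(\varphi,\,t)}^{(\mathcal{K}_j^{\ast},\,\mathcal{K}_{\neg j}^{\ast})}\varrho$ to be one. For your argument to go through you would need to know that the transfer-operator iterates $\mathcal{L}_{(\varphi,\,t)}^{(\mathcal{K}_j^{\ast},\,\mathcal{K}_{\neg j}^{\ast})}\varrho$ are asymptotically free-energy maximizing along $t \to \infty$, and nothing in \eqref{EQ16} or \eqref{EQ18} supplies that directly; your duality identity establishes invariance of the limit but not its extremality. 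The paper's separation/tangent-functional machinery is precisely what replaces this missing ingredient: it identifies equilibrium states as tangent functionals to the pressure map $\varphi \mapsto P_{\varphi}$ and then uses Hahn--Banach to show the candidate $\mu$ cannot be separated from the near-maximizer set $\mathcal{I}_{\varphi}(X)$. If you want to keep your more direct line, you would need an additional lemma linking the Ruelle--Perron--Frobenius iterates to the pressure (e.g.\ an exponential growth rate statement of the form $\tfrac{1}{t}\log \bigl\Vert \mathcal{L}_{(\varphi,\,t)}^{(\mathcal{K}_j^{\ast},\,\mathcal{K}_{\neg j}^{\ast})}\varrho \bigr\Vert \to P_{\varphi}$), which is standard in thermodynamic formalism but is not among the paper's stated hypotheses.
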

\begin{proof}
Note that if $\mathcal{E}_{\varphi}\bigl(X\bigr)$ is non-empty, then Equation~\eqref{EQ16} holds true for all $(\mathcal{K}_j,\,\mathcal{K}_{\neg j}^{\ast}) \in \mathscr{K}$. Let
\begin{align*}
 \mu \in \underbrace{\bigcap_{j \in \mathcal{N}} \bigcap_{t \ge 0} \overline{\left \{ \nu \in \mathcal{M}_{\Phi_t}\bigl(X\bigr) \,\Bigl \vert\, h_{\nu}\Bigl(\Phi_t^{(\mathcal{K}_j,\,\mathcal{K}_{\neg j}^{\ast})}\Bigr) + \int_{X} \varphi d\nu > P_{\varphi}\Bigl(\Phi_t^{(\mathcal{K}_j,\,\mathcal{K}_{\neg j}^{\ast})}\Bigr) - \frac{1}{n} \right\}}}_{\triangleq\mathcal{I}_{\varphi}\bigl(X\bigr)}. 
\end{align*}
and noticing that $\mathcal{E}_{\varphi}\bigl(X\bigr) \subset  \mathcal{I}_{\varphi}\bigl(X\bigr)$ (cf. Remark~\ref{RM4}), then there exists a family of invariant measures $\bigl\{\nu_n\bigr\}_{n \ge 1}$, $\nu_n \in \mathcal{M}_{\Phi_t}\bigl(X\bigr)$, such that
\begin{align*}
\nu_n \rightarrow \mu \quad  \text{as} \quad n \rightarrow \infty,
\end{align*}
and for all $t \ge 0$
\begin{align*}
\sup_{(\mathcal{K}_j,\,\mathcal{K}_{\neg j}^{\ast}) \in \mathscr{K}} \left \{h_{\nu_n}\Bigl(\Phi_t^{(\mathcal{K}_j,\,\mathcal{K}_{\neg j}^{\ast})}\Bigr) + \int_{X} \varphi d\nu_n\right\} \rightarrow \sup_{(\mathcal{K}_j,\,\mathcal{K}_{\neg j}^{\ast}) \in \mathscr{K}}\, \underbrace{P_{\varphi}\Bigl(\Phi_t^{(\mathcal{K}_j,\,\mathcal{K}_{\neg j}^{\ast})}\Bigr)}_{\triangleq P_{\varphi}\Bigl(\Phi_t^{(\mathcal{K}_j^{\ast},\,\mathcal{K}_{\neg j}^{\ast})}\Bigr)},
\end{align*}
which corresponds to the equilibrium state, when the feedback operators attain the game-theoretic equilibrium, i.e., $(\mathcal{K}_j^{\ast},\,\mathcal{K}_{\neg j}^{\ast}) \in \mathscr{K}$.\footnote{Here, we implicitly assume that there exists at least one common limit point of equilibrium state.}

Then, we have
\begin{align*}
  P_{\varphi + \widetilde{\varphi}} \Bigl(\Phi_t^{(\mathcal{K}_j^{\ast},\,\mathcal{K}_{\neg j}^{\ast})}\Bigr) - P_{\varphi} \Bigl(\Phi_t^{(\mathcal{K}_j^{\ast},\,\mathcal{K}_{\neg j}^{\ast})}\Bigr) &\ge h_{\nu_n}\Bigl(\Phi_t^{(\mathcal{K}_j^{\ast},\,\mathcal{K}_{\neg j}^{\ast})}\Bigr) + \int_{X} (\varphi + \widetilde{\varphi}) d\nu_n \notag \\
   &- P_{\varphi} \Bigl(\Phi_t^{(\mathcal{K}_j^{\ast},\,\mathcal{K}_{\neg j}^{\ast})}\Bigr) \rightarrow \int_{X} \widetilde{\varphi} d\mu, \,\, \forall \widetilde{\varphi} \in C(X, \mathbb{R}). 
\end{align*}
as $n \rightarrow \infty$. Hence, we have $\mu \in \mathcal{E}_{\varphi}\bigl(X\bigr)$.

Now suppose that there exist $\mu \in \mathcal{M}_{\Phi_t}\bigl(X\bigr) \backslash \mathcal{I}_{\varphi}\bigl(X\bigr)$ and $(\mathcal{K}_j^{\ast},\,\mathcal{K}_{\neg j}^{\ast}) \in \mathscr{K}$ such that Equation~\eqref{EQ17} holds. Notice that $\mathcal{I}_{\varphi}\bigl(X\bigr)$ is compact in the weak topology. Then, using the separation theorem (e.g., see \cite[pp 417]{DunSch58}), we have
\begin{align*}
   \int_{X} \widetilde{\varphi} d\mu > \sup \left\{\int_{X} \widetilde{\varphi} d\nu \, \Bigl\vert \,\nu \in \mathcal{I}_{\varphi}\bigl(X\bigr) \right\},
\end{align*}
for some $\widetilde{\varphi} \in C(X, \mathbb{R})$ which is bounded from above and below. 

Next, for each $n \ge 1$, if we choose $\nu_n$ such that
\begin{align*}
 h_{\nu_n}\Bigl(\Phi_t^{(\mathcal{K}_j^{\ast},\,\mathcal{K}_{\neg j}^{\ast})}\Bigr) + \int_{X} \bigl(\varphi + \frac{\widetilde{\varphi}}{n}\bigr) d\nu_n  > P_{\varphi + \frac{\widetilde{\varphi}}{n}} \Bigl(\Phi_t^{(\mathcal{K}_j^{\ast},\,\mathcal{K}_{\neg j}^{\ast})}\Bigr) - \frac{1}{n^2}.
\end{align*}
Then, we have
\begin{align*}
& \int_{X} \widetilde{\varphi} d\mu = n \int_{X} \frac{\widetilde{\varphi}}{n} d\mu \le n \left [P_{\varphi + \frac{\widetilde{\varphi}}{n}} \Bigl(\Phi_t^{(\mathcal{K}_j^{\ast},\,\mathcal{K}_{\neg j}^{\ast})}\Bigr) - P_{\varphi} \Bigl(\Phi_t^{(\mathcal{K}_j^{\ast},\,\mathcal{K}_{\neg j}^{\ast})}\Bigr)\right] \\
 & \quad\le n \left [h_{\nu_n}\Bigl(\Phi_t^{(\mathcal{K}_j^{\ast},\,\mathcal{K}_{\neg j}^{\ast})}\Bigr) + \int_{X} \bigl(\varphi + \frac{\widetilde{\varphi}}{n}\bigr) d\nu_n - h_{\nu_n}\Bigl(\Phi_t^{(\mathcal{K}_j^{\ast},\,\mathcal{K}_{\neg j}^{\ast})}\Bigr) -  \int_{X} \varphi d\nu_n + \frac{1}{n^2}\right] \\
 & \quad =  \int_{X}  \widetilde{\varphi} d\nu_n + \frac{1}{n}.
\end{align*}
From the above equation, if $\tilde{\mu}$ is a common limit point of $\bigl\{\nu_n\bigr\}_{n \ge 1}$, then $\int_{X} \widetilde{\varphi} d\mu \le \int_{X} \widetilde{\varphi} d\tilde{\mu}$. But, this is a contradiction, since $\tilde{\mu} \in \mathcal{I}_{\varphi}\bigl(X\bigr)$ follows from
\begin{align*}
 h_{\nu_n}\Bigl(\Phi_t^{(\mathcal{K}_j^{\ast},\,\mathcal{K}_{\neg j}^{\ast})}\Bigr) + \int_{X} \varphi d\nu_n & > P_{\varphi + \frac{\widetilde{\varphi}}{n}} \Bigl(\Phi_t^{(\mathcal{K}_j^{\ast},\,\mathcal{K}_{\neg j}^{\ast})}\Bigr) - \int_{X} \frac{\widetilde{\varphi}}{n} d\nu_n -  \frac{1}{n^2}.
\end{align*}
Hence, we easily see that $\mu \in \mathcal{E}_{\varphi}\bigl(X\bigr)$ is the equilibrium measure, which corresponds to the game-theoretic equilibrium feedback operators $(\mathcal{K}_j^{\ast},\,\mathcal{K}_{\neg j}^{\ast}) \in \mathscr{K}$ (i.e., the statement in Equation~\eqref{EQ17} hold true for all $t \ge 0$). On the other hand, if $\mu \in \mathcal{E}_{\varphi}\bigl(X\bigr)$ (i.e., the equilibrium measure), then there exists a measurable function $\varrho(x)$ (from a suitable function space on $X$) such that
\begin{align*}
  \mathcal{L}_{(\varphi,\,t)}^{(\mathcal{K}_j^{\ast},\,\mathcal{K}_{\neg j}^{\ast})} \varrho \in \mathcal{E}_{\varphi}\bigl(X\bigr), \quad \forall t \ge 0,
\end{align*}
and
\begin{align*}
 \lim_{t \rightarrow \infty} \, \mathcal{L}_{(\varphi,\,t)}^{(\mathcal{K}_j^{\ast},\,\mathcal{K}_{\neg j}^{\ast})} \varrho \rightarrow \mu.
\end{align*}
This completes the proof. \qed
\end{proof}

\begin{remark}
The last two equations state that, for any $t \ge 0$ and starting from any initial state, the best response path will always lead to the game-theoretic equilibrium feedback operators.
\end{remark}

\subsection{Resilient behavior of game-theoretic equilibrium feedback operators} \label{S3(3)}
In this subsection, we consider the following system with a small random perturbation term
\begin{align} 
 dZ^{\epsilon}(t) = \Bigl(A(t) + \sum\nolimits_{j \in \mathcal{N}} B_j(t) \mathcal{K}_j^{\ast}(t) \Bigr)Z^{\epsilon}(t) dt + \sqrt{\epsilon} \,\sigma(Z^{\epsilon}(t)) dW(t), \,\, Z^{\epsilon}(0)=x_0, \label{EQ19} 
 \end{align}
where $\sigma(Z^{\epsilon}(t)) \in \mathbb{R}^{d \times d}$ is a diffusion term, $W(t)$ is a $d$-dimensional Wiener process and $\epsilon$ is a small positive number, which represents the level of perturbation in the system. Note that we assume here there exist equilibrium feedback operators $\bigl(\mathcal{K}_1^{\ast}, \mathcal{K}_2^{\ast}, \ldots, \mathcal{K}_N^{\ast} \bigr) \in \mathscr{K}$, when $\epsilon = 0$ (which corresponds to the unperturbed multi-channel dynamical system). Then, we investigate, as $\epsilon \rightarrow 0$, the asymptotic stability behavior of an equilibrium measure for the family of linear operators $\Bigl\{\mathcal{L}_{(\varphi^{\epsilon},\,t)}^{(\mathcal{K}_j^{\ast},\,\mathcal{K}_{\neg j}^{\ast})} \Bigr\}_{t \ge 0}$ that corresponds to the multi-channel dynamical system with a small random perturbation.

\begin{remark}
We remark that such a solution for Equation~\eqref{EQ19} is assumed to have continuous sample paths with probability one (e.g., see Kunita \cite{Kun90}).
\end{remark}
Next, we present the following proposition, which is concerned with the resilient behavior of the equilibrium feedback operators -- when there is a small random perturbation in the system. 
\begin{proposition}\label{PR3}
For $\epsilon > 0$ and any measurable function $\varrho(x)$, if the family of linear operators $\Bigl\{\mathcal{L}_{(\varphi^{\epsilon},\,t)}^{(\mathcal{K}_j^{\ast},\,\mathcal{K}_{\neg j}^{\ast})} \Bigr\}_{t \ge 0}$ satisfies the following relative entropy condition with respect to the unperturbed dynamical system
\begin{align}
\mathcal{R}\Bigr(\mathcal{L}_{(\varphi^{\epsilon},\,t)}^{(\mathcal{K}_j^{\ast},\,\mathcal{K}_{\neg j}^{\ast})}\varrho \,\Bigl \Vert \,\mathcal{L}_{(\varphi,\,t)}^{(\mathcal{K}_j^{\ast},\,\mathcal{K}_{\neg j}^{\ast})}\varrho \Bigl) &= \int_{X} \log \left(\frac{d\,\mathcal{L}_{(\varphi^{\epsilon},\,t)}^{(\mathcal{K}_j^{\ast},\,\mathcal{K}_{\neg j}^{\ast})}\varrho} {d\,\mathcal{L}_{(\varphi,\,t)}^{(\mathcal{K}_j^{\ast},\,\mathcal{K}_{\neg j}^{\ast})}\varrho} \right) d\,\mathcal{L}_{(\varphi^{\epsilon},\,t)}^{(\mathcal{K}_j^{\ast},\,\mathcal{K}_{\neg j}^{\ast})}\varrho \notag\\
&\le\, \rho_{(\epsilon,\,t)}^{(\mathcal{K}_j^{\ast},\,\mathcal{K}_{\neg j}^{\ast})}, \quad \mathcal{L}_{(\varphi^{\epsilon},\,t)}^{(\mathcal{K}_j^{\ast},\,\mathcal{K}_{\neg j}^{\ast})}\varrho \ll\mathcal{L}_{(\varphi,\,t)}^{(\mathcal{K}_j^{\ast},\,\mathcal{K}_{\neg j}^{\ast})}\varrho , \,\, \forall t \ge 0, \label{EQ20} 
\end{align}
where $\rho_{(\epsilon,\,t)}^{(\mathcal{K}_j^{\ast},\,\mathcal{K}_{\neg j}^{\ast})}$ is the entropy gap, which is a positive quantity (that depends on $\epsilon$ and also tends to zero as $\epsilon \rightarrow 0$). 

Then, the equilibrium feedback operators $\bigl(\mathcal{K}_1^{\ast}, \mathcal{K}_2^{\ast}, \ldots, \mathcal{K}_N^{\ast} \bigr) \in \mathscr{K}$ exhibit a resilient behavior.\footnote{Note that $\varrho(x)$ is assumed to satisfy $\int_{X} \varrho(x)d\nu < \infty$ (cf. Equation~\eqref{EQ15}).}
\end{proposition}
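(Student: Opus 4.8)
The plan is to read the relative-entropy hypothesis \eqref{EQ20} as a quantitative stability estimate and, through Pinsker's inequality, convert it into a total-variation bound between the perturbed and the unperturbed transfer operators; letting $\epsilon \to 0$ then collapses the perturbed family $\{\mathcal{L}_{(\varphi^{\epsilon},\,t)}^{(\mathcal{K}_j^{\ast},\,\mathcal{K}_{\neg j}^{\ast})}\varrho\}_{t \ge 0}$ onto the unperturbed one, and Proposition~\ref{PR2} (specifically \eqref{EQ18}) identifies the common limit as the equilibrium measure $\mu \in \mathcal{E}_{\varphi}(X)$. Accordingly, resilient behavior is taken to mean precisely that every weak limit point $\mu^{\epsilon}$ of $\{\mathcal{L}_{(\varphi^{\epsilon},\,t)}^{(\mathcal{K}_j^{\ast},\,\mathcal{K}_{\neg j}^{\ast})}\varrho\}_{t \ge 0}$ — the equilibrium measure of the perturbed system \eqref{EQ19} driven by the same operators — converges, as $\epsilon \to 0$, back to $\mu$, so that $(\mathcal{K}_1^{\ast},\ldots,\mathcal{K}_N^{\ast})$ recover their unperturbed game-theoretic equilibrium role in the vanishing-noise limit. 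I would also note in passing that \eqref{EQ20} is exactly the kind of bound a Freidlin--Wentzell-type large-deviations analysis of the small-noise diffusion \eqref{EQ19} furnishes, so that the hypothesis is natural rather than vacuous.

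First I would record Pinsker's inequality, $\| \mathcal{L}_{(\varphi^{\epsilon},\,t)}^{(\mathcal{K}_j^{\ast},\,\mathcal{K}_{\neg j}^{\ast})}\varrho - \mathcal{L}_{(\varphi,\,t)}^{(\mathcal{K}_j^{\ast},\,\mathcal{K}_{\neg j}^{\ast})}\varrho \|_{\mathrm{TV}}^{2} \le \tfrac{1}{2}\,\mathcal{R}( \mathcal{L}_{(\varphi^{\epsilon},\,t)}^{(\mathcal{K}_j^{\ast},\,\mathcal{K}_{\neg j}^{\ast})}\varrho \,\|\, \mathcal{L}_{(\varphi,\,t)}^{(\mathcal{K}_j^{\ast},\,\mathcal{K}_{\neg j}^{\ast})}\varrho )$, which is legitimate because \eqref{EQ20} presupposes the absolute continuity $\mathcal{L}_{(\varphi^{\epsilon},\,t)}^{(\mathcal{K}_j^{\ast},\,\mathcal{K}_{\neg j}^{\ast})}\varrho \ll \mathcal{L}_{(\varphi,\,t)}^{(\mathcal{K}_j^{\ast},\,\mathcal{K}_{\neg j}^{\ast})}\varrho$; together with the entropy-gap bound this yields $\| \mathcal{L}_{(\varphi^{\epsilon},\,t)}^{(\mathcal{K}_j^{\ast},\,\mathcal{K}_{\neg j}^{\ast})}\varrho - \mathcal{L}_{(\varphi,\,t)}^{(\mathcal{K}_j^{\ast},\,\mathcal{K}_{\neg j}^{\ast})}\varrho \|_{\mathrm{TV}} \le ( \tfrac{1}{2}\,\rho_{(\epsilon,\,t)}^{(\mathcal{K}_j^{\ast},\,\mathcal{K}_{\neg j}^{\ast})} )^{1/2}$ for every $t \ge 0$, so, since the entropy gap tends to $0$ as $\epsilon \to 0$, the perturbed operator converges to the unperturbed one in total variation, hence weakly, for each fixed $t$. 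Next I would invoke \eqref{EQ18}: $\mathcal{L}_{(\varphi,\,t)}^{(\mathcal{K}_j^{\ast},\,\mathcal{K}_{\neg j}^{\ast})}\varrho \to \mu$ as $t \to \infty$ with $\mu \in \mathcal{E}_{\varphi}(X) \subset \mathcal{M}_{\Phi_t}(X)$. Using that $\mathcal{M}_{\Phi_t}(X)$ is compact in the weak topology, I would choose for each $\epsilon$ a limit point $\mu^{\epsilon}$ of $\{\mathcal{L}_{(\varphi^{\epsilon},\,t)}^{(\mathcal{K}_j^{\ast},\,\mathcal{K}_{\neg j}^{\ast})}\varrho\}_{t \ge 0}$ and, in a metric $d$ compatible with the weak topology, bound $d(\mu^{\epsilon},\mu) \le d(\mu^{\epsilon}, \mathcal{L}_{(\varphi^{\epsilon},\,t)}^{(\mathcal{K}_j^{\ast},\,\mathcal{K}_{\neg j}^{\ast})}\varrho) + d(\mathcal{L}_{(\varphi^{\epsilon},\,t)}^{(\mathcal{K}_j^{\ast},\,\mathcal{K}_{\neg j}^{\ast})}\varrho, \mathcal{L}_{(\varphi,\,t)}^{(\mathcal{K}_j^{\ast},\,\mathcal{K}_{\neg j}^{\ast})}\varrho) + d(\mathcal{L}_{(\varphi,\,t)}^{(\mathcal{K}_j^{\ast},\,\mathcal{K}_{\neg j}^{\ast})}\varrho,\mu)$; letting $t \to \infty$ annihilates the first and third terms, while the Pinsker estimate drives the middle one to zero as $\epsilon \to 0$, so $\mu^{\epsilon} \to \mu$. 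Finally, since $\varphi^{\epsilon} \to \varphi$ in $C(X,\mathbb{R})$ and the free-energy functional is upper semicontinuous (the remark following Proposition~\ref{PR1}), the limit $\mu$ of the perturbed equilibrium measures $\mu^{\epsilon}$ again lies in $\mathcal{E}_{\varphi}(X)$, which is exactly the assertion that $(\mathcal{K}_1^{\ast},\ldots,\mathcal{K}_N^{\ast})$ are resilient.

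The step I expect to be the main obstacle is the interchange of the limits $\epsilon \to 0$ and $t \to \infty$ in the triangle-inequality argument: to make it rigorous one needs the total-variation gap $( \rho_{(\epsilon,\,t)}^{(\mathcal{K}_j^{\ast},\,\mathcal{K}_{\neg j}^{\ast})} )^{1/2}$ to vanish as $\epsilon \to 0$ uniformly in $t$ (or at least along the sequence of times realizing the limit point $\mu^{\epsilon}$), which I would build into the statement as a sharpening of the phrase ``tends to zero as $\epsilon \to 0$''. Without such uniformity, the fallback is to combine the upper semicontinuity of the measure-theoretic entropy with the weak compactness of $\mathcal{M}_{\Phi_t}(X)$ to obtain equicontinuity in $t$ of the family $\{\mathcal{L}_{(\varphi^{\epsilon},\,t)}^{(\mathcal{K}_j^{\ast},\,\mathcal{K}_{\neg j}^{\ast})}\varrho\}$ and then push through a diagonal extraction. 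A secondary point to verify is that the absolute-continuity relation $\mathcal{L}_{(\varphi^{\epsilon},\,t)}^{(\mathcal{K}_j^{\ast},\,\mathcal{K}_{\neg j}^{\ast})}\varrho \ll \mathcal{L}_{(\varphi,\,t)}^{(\mathcal{K}_j^{\ast},\,\mathcal{K}_{\neg j}^{\ast})}\varrho$ persists along the limit, so that $\mathcal{R}(\cdot\,\|\,\cdot)$ stays finite and well defined throughout; here I would use that $Z^{\epsilon}$ has almost surely continuous sample paths (the remark preceding the proposition) and that $\varrho$ satisfies $\int_{X}|\varrho(x)|\,d\nu < \infty$ as in \eqref{EQ15}.
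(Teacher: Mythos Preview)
Your approach and the paper's share one key ingredient --- the Csisz\'ar/Pinsker inequality linking relative entropy to an $L^{1}$ (total-variation) distance --- but diverge in what is actually being proved and in the intended meaning of ``resilient behavior.'' You treat \eqref{EQ20} strictly as a hypothesis and, via Pinsker plus a triangle-inequality/diagonal argument, deduce the convergence $\mu^{\epsilon}\to\mu$ of perturbed equilibrium measures; you then \emph{define} resilience as this convergence. The paper, by contrast, does not attempt to prove $\mu^{\epsilon}\to\mu$ from \eqref{EQ20}. Its proof first records the Csisz\'ar lower bound
\[
\tfrac{1}{2}\bigl\Vert d\,\mathcal{L}_{(\varphi^{\epsilon},\,t)}^{(\mathcal{K}_j^{\ast},\,\mathcal{K}_{\neg j}^{\ast})}\varrho - d\,\mathcal{L}_{(\varphi,\,t)}^{(\mathcal{K}_j^{\ast},\,\mathcal{K}_{\neg j}^{\ast})}\varrho\bigr\Vert_{L^1}^{2}\le \mathcal{R}\bigl(\cdot\,\Vert\,\cdot\bigr),
\]
and then, under a support-inclusion assumption, produces an explicit \emph{upper} bound for the relative entropy through a chain of elementary inequalities, ultimately setting
\[
\rho_{(\epsilon,\,t)}^{(\mathcal{K}_j^{\ast},\,\mathcal{K}_{\neg j}^{\ast})}
=\inf_{t\ge 0}\frac{1}{\mu(X)}\int_{X}\Bigl((d\,\mathcal{L}_{(\varphi^{\epsilon},\,t)}^{(\mathcal{K}_j^{\ast},\,\mathcal{K}_{\neg j}^{\ast})}\varrho)^{2}-(d\,\mathcal{L}_{(\varphi,\,t)}^{(\mathcal{K}_j^{\ast},\,\mathcal{K}_{\neg j}^{\ast})}\varrho)^{2}\Bigr)\,d\,\mathcal{L}_{(\varphi^{\epsilon},\,t)}^{(\mathcal{K}_j^{\ast},\,\mathcal{K}_{\neg j}^{\ast})}\varrho,
\]
so that \eqref{EQ20} holds with this particular $\rho$; the paper then \emph{identifies} resilience with the validity of \eqref{EQ20} itself (i.e., a uniformly bounded, $\epsilon$-small deviation of the perturbed transfer operators from the unperturbed ones). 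The convergence statement you labor to establish appears in the paper only afterward, as the separate observation \eqref{EQ21}, attributed to Kifer's small-random-perturbation theory rather than derived within the proof.

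In short: your argument is a legitimate and arguably more substantive reading of the proposition, but it is not the paper's. What you gain is an actual dynamical conclusion ($\mu^{\epsilon}\to\mu$), at the price of the limit-interchange issue you correctly flag; what the paper gains is an explicit construction of the entropy gap $\rho_{(\epsilon,\,t)}^{(\mathcal{K}_j^{\ast},\,\mathcal{K}_{\neg j}^{\ast})}$ from $L^{1}$ data, which your outline does not attempt since you take \eqref{EQ20} as given.
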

\begin{proof}
Suppose that the set of feedback operators $\bigl(\mathcal{K}_1^{\ast}, \mathcal{K}_2^{\ast}, \ldots, \mathcal{K}_N^{\ast} \bigr) \in \mathscr{K}$ satisfies Proposition~\ref{PR2}. Then, for each $\epsilon \ge 0$ that is sufficiently small, the following lower bound condition holds\footnote{We remark that such a lower bound on the relative entropy between two measurable functions from the $L^1$ space was originally proved by Csisz\'{a}r \cite{Csi67}.}
\begin{align*}
  \frac{1}{2}\Bigl \Vert d\,\mathcal{L}_{(\varphi^{\epsilon},\,t)}^{(\mathcal{K}_j^{\ast},\,\mathcal{K}_{\neg j}^{\ast})}\varrho - d\,\mathcal{L}_{(\varphi,\,t)}^{(\mathcal{K}_j^{\ast},\,\mathcal{K}_{\neg j}^{\ast})}\varrho \Bigr\Vert_{L^1}^2 \le \mathcal{R}\Bigr(\mathcal{L}_{(\varphi^{\epsilon},\,t)}^{(\mathcal{K}_j^{\ast},\,\mathcal{K}_{\neg j}^{\ast})}\varrho \,\Bigl \Vert \,\mathcal{L}_{(\varphi,\,t)}^{(\mathcal{K}_j^{\ast},\,\mathcal{K}_{\neg j}^{\ast})}\varrho \Bigl), ~~ \forall t \ge 0.
 \end{align*}
On the other hand, suppose that  $\operatorname{supp} d\,\mathcal{L}_{(\varphi^{\epsilon},\,t)}^{(\mathcal{K}_j^{\ast},\,\mathcal{K}_{\neg j}^{\ast})}\varrho(x) \subset \operatorname{supp}d\,\mathcal{L}_t^{(\mathcal{K}_j^{\ast},\,\mathcal{K}_{\neg j}^{\ast})}\varrho(x)$, $\forall t \ge 0$ and $\forall x \in X$. Then, we have
\begin{align*}
& \mathcal{R}\Bigr(\mathcal{L}_{(\varphi^{\epsilon},\,t)}^{(\mathcal{K}_j^{\ast},\,\mathcal{K}_{\neg j}^{\ast})}\varrho \,\Bigl \Vert \,\mathcal{L}_{(\varphi,\,t)}^{(\mathcal{K}_j^{\ast},\,\mathcal{K}_{\neg j}^{\ast})}\varrho \Bigl)  \\
 &\quad \quad = \int_{X} \log \left(\frac{d\,\mathcal{L}_{(\varphi^{\epsilon},\,t)}^{(\mathcal{K}_j^{\ast},\,\mathcal{K}_{\neg j}^{\ast})}\varrho} {d\,\mathcal{L}_{(\varphi,\,t)}^{(\mathcal{K}_j^{\ast},\,\mathcal{K}_{\neg j}^{\ast})}\varrho} \right) d\,\mathcal{L}_{(\varphi^{\epsilon},\,t)}^{(\mathcal{K}_j^{\ast},\,\mathcal{K}_{\neg j}^{\ast})}\varrho, \\
                                                      &\quad \quad  \le \int_{X} \left(\frac{d\,\mathcal{L}_{(\varphi^{\epsilon},\,t)}^{(\mathcal{K}_j^{\ast},\,\mathcal{K}_{\neg j}^{\ast})}\varrho} {d\,\mathcal{L}_{(\varphi,\,t)}^{(\mathcal{K}_j^{\ast},\,\mathcal{K}_{\neg j}^{\ast})}\varrho} \right) \left (d\,\mathcal{L}_{(\varphi^{\epsilon},\,t)}^{(\mathcal{K}_j^{\ast},\,\mathcal{K}_{\neg j}^{\ast})}\varrho - d\,\mathcal{L}_{(\varphi,\,t)}^{(\mathcal{K}_j^{\ast},\,\mathcal{K}_{\neg j}^{\ast})}\varrho\right) d\,\mathcal{L}_{(\varphi^{\epsilon},\,t)}^{(\mathcal{K}_j^{\ast},\,\mathcal{K}_{\neg j}^{\ast})}\varrho, \\
                                                     &\quad \quad  \le \int_{X} \frac{1} {d\,\mathcal{L}_{(\varphi,\,t)}^{(\mathcal{K}_j^{\ast},\,\mathcal{K}_{\neg j}^{\ast})}\varrho} \left( \left(d\,\mathcal{L}_{(\varphi^{\epsilon},\,t)}^{(\mathcal{K}_j^{\ast},\,\mathcal{K}_{\neg j}^{\ast})}\varrho\right)^2 - \left(d\,\mathcal{L}_{(\varphi,\,t)}^{(\mathcal{K}_j^{\ast},\,\mathcal{K}_{\neg j}^{\ast})}\varrho\right)^2\right) d\,\mathcal{L}_{(\varphi^{\epsilon},\,t)}^{(\mathcal{K}_j^{\ast},\,\mathcal{K}_{\neg j}^{\ast})}\varrho, \\
                                                     &\quad \quad  \le \frac{1}{\mu \bigl(X \bigr)} \int_{X} \left( \left(d\,\mathcal{L}_{(\varphi^{\epsilon},\,t)}^{(\mathcal{K}_j^{\ast},\,\mathcal{K}_{\neg j}^{\ast})}\varrho\right)^2 - \left(d\,\mathcal{L}_{(\varphi,\,t)}^{(\mathcal{K}_j^{\ast},\,\mathcal{K}_{\neg j}^{\ast})}\varrho\right)^2\right) d\,\mathcal{L}_{(\varphi^{\epsilon},\,t)}^{(\mathcal{K}_j^{\ast},\,\mathcal{K}_{\neg j}^{\ast})}\varrho.
\end{align*}
Note that both measurable functions $\mathcal{L}_{(\varphi^{\epsilon},\,t)}^{(\mathcal{K}_j^{\ast},\,\mathcal{K}_{\neg j}^{\ast})}\varrho$ and $\mathcal{L}_{(\varphi,\,t)}^{(\mathcal{K}_j^{\ast},\,\mathcal{K}_{\neg j}^{\ast})}\varrho$ lie in the $L^1$ space for each $t \ge 0$. Hence, the above last expression is bounded from above by a constant (which also depends implicitly on the perturbation level $\epsilon \ge 0$). Further, if we let
\begin{align*}
  \rho_{(\epsilon,\,t)}^{(\mathcal{K}_j^{\ast},\,\mathcal{K}_{\neg j}^{\ast})} = \inf_{t \ge 0} \frac{1}{\mu \bigl(X \bigr)} \int_{X} \left( \left(d\,\mathcal{L}_{(\varphi^{\epsilon},\,t)}^{(\mathcal{K}_j^{\ast},\,\mathcal{K}_{\neg j}^{\ast})}\varrho\right)^2 - \left(d\,\mathcal{L}_{(\varphi,\,t)}^{(\mathcal{K}_j^{\ast},\,\mathcal{K}_{\neg j}^{\ast})}\varrho\right)^2\right) d\,\mathcal{L}_{(\varphi^{\epsilon},\,t)}^{(\mathcal{K}_j^{\ast},\,\mathcal{K}_{\neg j}^{\ast})}\varrho.
\end{align*}
Then, we see that Equation~\eqref{EQ20} holds -- where such a statement evidently establishes the resilient behavior of the equilibrium feedback operators $\bigl(\mathcal{K}_1^{\ast}, \mathcal{K}_2^{\ast}, \ldots, \mathcal{K}_N^{\ast} \bigr) \in \mathscr{K}$, when there is a random perturbation in the system. This completes the proof. \qed
\end{proof}
The above proposition states that the equilibrium feedback operators exhibit a resilient behavior, when the contribution of the random perturbation term, to move away the system from the equilibrium measure $\mu$, is bounded from above for all $t \ge 0$.\footnote{Heuristically, $\rho_{(\epsilon,\,t)}^{(\mathcal{K}_j^{\ast},\,\mathcal{K}_{\neg j}^{\ast})}$, for $t \ge 0$, is the amount of entropy which is attributed due to the presence of random perturbations in the system.}

We also note that the following holds
\begin{align}
\lim_{t \rightarrow \infty} \mathcal{L}_{(\varphi^{\epsilon},\,t)}^{(\mathcal{K}_j^{\ast},\,\mathcal{K}_{\neg j}^{\ast})} \varrho \rightarrow  \mu  \quad  \text{as} \quad \epsilon \rightarrow 0. \label{EQ21} 
\end{align} 
Therefore, such a statement in Equation~\eqref{EQ20} is an immediate consequence of this fact (e.g., see Kifer \cite{Kif88} on the convergence of equilibrium measures for dynamical systems with small random perturbations, in the sense of deterministic limit).

\begin{remark}
Although we have not discussed the limiting behavior, as $\epsilon \rightarrow 0$, in Equation~\eqref{EQ21}, it appears that the {\em theory of large deviations} can be used to estimate the rate at which this family of measures converges to a unique equilibrium measure (e.g., see also \cite{You90}, \cite{Tou09} or \cite{Kif90} for background information).
\end{remark}

\begin{acknowledgements}
This work was supported in part by the National Science Foundation under Grant No. CNS-1035655. The author acknowledges support from the Department of Electrical Engineering, University of Notre Dame.
\end{acknowledgements}

\end{document}